\newcommand{\str}{\mathcal{O}}
\newcommand{\proj}{\mathbb{P}}
\newcommand{\complex}{\mathbb C}
\theoremstyle{plain}
\numberwithin{equation}{section}
\newtheorem{theorem}{Theorem}[section]
\newtheorem*{theorem*}{Theorem}
\newtheorem{proposition}[theorem]{Proposition}
\newtheorem{corollary}[theorem]{Corollary}
\newtheorem*{conjecture*}{Nagata Conjecture}
\newtheorem*{conjecture1*}{SHGH Conjecture}
\theoremstyle{definition}
\newtheorem{remark}[theorem]{Remark}
\newtheorem{example}[theorem]{Example}
\begin{document}

\title{Positivity of line bundles on special blow ups of $\proj^2$}
\author[Krishna Hanumanthu]{Krishna Hanumanthu}
\subjclass[2010]{Primary 14E25, 14C20; Secondary 14H50, 14J26}
\thanks{Author was partially supported by a grant from Infosys Foundation}
\address{Chennai Mathematical Institute, H1 SIPCOT IT Park, Siruseri,
  Kelambakkam 603103, India}
\keywords{Blow ups of $\proj^2$, ampleness, $k$-very ampleness}
\date{January 3, 2017}
\email{krishna@cmi.ac.in}
\maketitle

\begin{abstract}
Let $C \subset \proj^2$ be an irreducible and reduced curve of degree
$e$. 
Let $X$ be the blow up of $\proj^2$ at $r$ distinct smooth points
$p_1,\ldots,p_r \in C$. Motivated by results in \cite{H1,H2,VT}, we study
line bundles on $X$ 
and establish conditions for ampleness and $k$-very ampleness. 
\end{abstract}

\section{Introduction}
Let $X$ denote the blow up $\proj^2$ at $r$ points $p_1,\ldots,p_r \in
\proj^2$. It is interesting to ask when a given line bundle $L$ on $X$
has positivity properties such as 
ampleness, very ampleness,  global generation, and more generally, {\it
  $k$-very ampleness} (see the definition below). If the points
$p_1,\ldots,p_r$ are
general in $\proj^2$, this question has been extensively studied
and is related to important conjectures in algebraic
geometry. See \cite{Han,ST} for a detailed introduction and some results
in this case.

There has also been some work on these questions when the points are
special in some way. 
In \cite{H1,H2}, Harbourne considered blow ups of $\proj^2$ at $r$ 
points (not necessarily distinct) on an irreducible and reduced plane
cubic and a characterization of
line bundles with various positivity properties (ample, global
generated, effective, very ample) was given. De Volder \cite{DV}
partially generalized results of \cite{H1,H2} by
considering blow ups of points (not necessarily distinct) on a reduced and irreducible curve of
degree $e \ge 4$ and giving sufficient conditions for global generation
and very ampleness. 

Let $k$ be a non-negative integer. A line bundle $L$ on a projective
variety $X$ is said to be {\it $k$-very ample} if the restriction map 
$$H^0(X,L) \to H^0(X,L\otimes \str_Z)$$
is surjective for all zero-dimensional subschemes $Z \subset X$ of
length $k+1$; in other words, for all zero-dimensional subschemes $Z$
such that dim$(H^0(Z,\str_Z)) = k+1$. 

Note that 0-very ampleness is equivalent to global generation and
1-very ampleness is equivalent to very ampleness. 
As a result, $k$-very ampleness is considered a more general
positivity property for a line bundle. 
See \cite{BFS,BS1,BS2} for more details on the notion of $k$-very
ampleness.

A general theorem for $k$-very ampleness on
blow ups of projective varieties was proved by Beltrametti and Sommese in
\cite{BS3}.  De Volder and Tutaj-Gasi\'{n}ska \cite{VT} study $k$-very ampleness
for line bundles on blow ups of $\proj^2$ at general points on an
irreducible and reduced cubic. Szemberg and Tutaj-Gasi\'{n}ska \cite{ST} study $k$-very ampleness
for line bundles on blow ups of $\proj^2$ at general points. 
The property of $k$-very ampleness is also studied for other
classes of surfaces as well as higher-dimensional varieties. See \cite{BS,F}, for instance.

Our primary motivation comes from \cite{H1,H2,VT}. These papers study
positivity questions when $X$ is the blow up of $\proj^2$ at points on
a plane cubic. In this paper we generalize some of these results by considering 
blow ups of
$\proj^2$ at $r$ {\it distinct} and  {\it smooth} points on an irreducible and reduced plane
curve of degree $e$. 
More precisely, let $C \subset \proj^2$ be an irreducible and reduced curve of degree
$e$. Let $p_1,\ldots,p_r$ be distinct smooth points on $C$. We consider the
blow up $\pi:X \to \proj^2$ of $\proj^2$ at $p_1,\ldots,p_r$. Let $H$
denote the pull-back of $\str_{\proj^2}(1)$ and let $E_1,\ldots,E_r$
be the inverse images of $p_1,\ldots,p_r$ respectively. Given a line
bundle $L=dH-m_1E_1-m_2E_2-\ldots-m_rE_r$ on $X$, 
we are concerned with conditions on $d,e,r,m_1,\ldots,m_r$ which ensure
ampleness and $k$-very ampleness of $L$. 

In Section \ref{ample}, we study ampleness and prove our main result
(Theorem \ref{main}) in this case. 
In Section
\ref{k-very-ample}, we study
$k$-very ampleness. 
Here our main result is Theorem \ref{main1} which gives conditions for
$k$-very ampleness.

We work throughout over the complex number field $\complex$. 

\section{Ampleness}\label{ample}

The following is our main theorem on ampleness. 
\begin{theorem} \label{main} Let $C$ be an irreducible and reduced plane curve of
  degree $e$. Let $X \to \proj^2$ be the blow up of $\proj^2$ at $r$
  distinct smooth points $p_1,\ldots,p_r \in C$.  Let $H$
denote the pull-back of $\str_{\proj^2}(1)$ and let $E_1,\ldots,E_r$
be the inverse images of $p_1,\ldots,p_r$ respectively.
Let $L$ be a line bundle on $X$ with $L\cdot E_i > 0$ for all 
$1\le i \le r$.
Let $C_1$ denote the proper transform of $C$ on $X$.

If $L\cdot C_1 > 0$ and 
 $L \cdot H > L\cdot (E_{i_1}+\ldots+E_{i_e})$ for any $e$ distinct indices
 $i_1,\ldots,i_e \in \{1,\ldots,r\}$, then $L$ is ample. 
\end{theorem}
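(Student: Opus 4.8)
The plan is to apply the Nakai--Moishezon criterion on the smooth projective surface $X$: a line bundle $L$ is ample if and only if $L^2 > 0$ and $L \cdot D > 0$ for every irreducible curve $D \subset X$. Writing $L = dH - \sum_{i=1}^r m_i E_i$, the intersection rules $H^2 = 1$, $E_i^2 = -1$, $H \cdot E_i = 0 = E_i \cdot E_j$ for $i\ne j$ give $L \cdot H = d$ and $L \cdot E_i = m_i$. Since the $p_i$ are smooth points of $C$, the proper transform is $C_1 = eH - \sum_{i=1}^r E_i$, so $L \cdot C_1 = de - \sum_i m_i$. The hypotheses thus read: $m_i > 0$ for all $i$; $de > \sum_i m_i$; and $d > m_{i_1} + \cdots + m_{i_e}$ for every choice of $e$ distinct indices, i.e. $d$ exceeds the sum of the $e$ largest of the $m_i$. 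I will use this last condition as the key numerical input.

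First I would verify $L \cdot D > 0$ for every irreducible curve $D$. Every such $D$ is either one of the $E_i$, or $C_1$, or the proper transform of an irreducible plane curve $D' \ne C$ of some degree $\delta \ge 1$. The first two cases are exactly the hypotheses $L\cdot E_i = m_i > 0$ and $L \cdot C_1 > 0$. In the remaining case, writing $\mu_i = \operatorname{mult}_{p_i}(D') \ge 0$ we have $D = \delta H - \sum_i \mu_i E_i$ and $L \cdot D = d\delta - \sum_i m_i \mu_i$. Here the hypothesis on $e$-subsets does the work: since $D'$ and $C$ are distinct irreducible curves, B\'ezout gives $\delta e = D'\cdot C \ge \sum_i \mu_i$ (each $p_i$ being a smooth point of $C$ contributes local intersection multiplicity at least $\mu_i$), and of course $\mu_i \le \delta$. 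Maximizing the linear form $\sum_i m_i \mu_i$ over $0 \le \mu_i \le \delta$ with $\sum_i \mu_i \le \delta e$ is a fractional knapsack problem whose optimum places weight $\delta$ on the $e$ indices with largest $m_i$; hence $\sum_i m_i \mu_i \le \delta\,(\text{sum of the } e \text{ largest } m_i) < \delta d$, so $L \cdot D > 0$. This step is clean and is precisely where the $e$-subset condition is tailored to the geometry.

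The remaining and, I expect, the main obstacle is to prove $L^2 > 0$, equivalently $\sum_i m_i^2 < d^2$, from the two numerical hypotheses $d > (\text{sum of the } e \text{ largest } m_i)$ and $\sum_i m_i < de$. Neither condition alone suffices, and using them independently loses too much (one is tempted to bound $\sum_i m_i^2 \le (\max_i m_i)\sum_i m_i$, which is far too weak); the point is that the two constraints are coupled through the ordering of the $m_i$. I would reduce to sorted sequences $m_1 \ge \cdots \ge m_r \ge 0$ and observe that $m_i \le \tfrac1e(m_1 + \cdots + m_e) \le d/e$ for every $i \ge e$, while the largest $e$ values are controlled by $m_1 + \cdots + m_e < d$. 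Since $\sum_i m_i^2$ is convex, its maximum over the polytope cut out by these constraints is attained at a vertex, and a short case analysis of the vertices shows the maximal value is exactly $d^2$, attained in the limit only when all nonzero $m_i$ equal $d/e$ with both constraints tight. As our hypotheses are strict, this yields $\sum_i m_i^2 < d^2$.

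With $L^2 > 0$ and $L \cdot D > 0$ for all irreducible curves $D$ established, Nakai--Moishezon gives that $L$ is ample, completing the proof. I anticipate that packaging the self-intersection inequality as a clean, self-contained numerical lemma --- making transparent how the ordering couples the two hypotheses --- will be the part requiring the most care.
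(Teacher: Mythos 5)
Your verification that $L\cdot D>0$ for every irreducible curve $D$ is essentially the paper's argument: the paper likewise reduces to $D=fH-\sum n_iE_i$ with $n_i\ge 0$ and $ef\ge \sum n_i$ (B\'ezout against $C$) and bounds $\sum m_in_i$ by concentrating all the weight on the $e$ largest multiplicities; your fractional-knapsack phrasing is just a cleaner packaging of the same chain of inequalities. Where you genuinely diverge is the step $L^2>0$. The paper does not prove the numerical inequality $\sum m_i^2<d^2$ directly: it invokes Proposition \ref{standard-is-effective}, which writes $L$ explicitly as a non-negative combination of the effective classes $H$, $H-E_1$, $2H-E_1-E_2,\dots,(e-1)H-E_1-\cdots-E_{e-1}$, $eH-E_1-\cdots-E_i$ ($i\ge e$), so that $L$ is effective, and then $L^2>0$ follows by pairing $L$ against the components of a member of $|L|$ using the already-established positivity on irreducible curves. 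Your route instead maximizes the convex function $\sum m_i^2$ over the polytope $\{m_1\ge\cdots\ge m_r\ge 0,\ m_1+\cdots+m_e\le d,\ \sum_i m_i\le de\}$. That lemma is true and your hypotheses do yield it: with $s=m_1+\cdots+m_e$ and $t=m_e$ one has $\sum_{i\le e}m_i^2\le (s-(e-1)t)^2+(e-1)t^2$ and $\sum_{i>e}m_i^2\le t(de-s)$, and the sum of these bounds is convex in $t$ with value $s^2$ at $t=0$ and $sd$ at $t=s/e$, whence $\sum m_i^2\le sd<d^2$ as soon as $s<d$. One inaccuracy: your description of the equality locus is off, since $d^2$ is also attained at $m_1=d$, $m_2=\cdots=m_r=0$, where the second constraint is slack; what saves you is that strictness of the \emph{first} constraint alone already forces $\sum m_i^2\le sd<d^2$, so the conclusion stands. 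The trade-off between the two routes: the paper's effectivity argument is softer and the proposition it leans on is of independent interest (it gives effectivity, not merely $L^2>0$, under the weak inequality $d\ge m_1+\cdots+m_e$, generalizing Harbourne's standard-class decomposition), whereas your argument is self-contained and purely numerical but requires the vertex analysis to be written out with the care you yourself anticipate.
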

\begin{proof}

 Let $d: = L\cdot H >0$ and
$m_i := L \cdot
E_i > 0$ for every $1\le i \le r$.  By permuting the points, if necessary, assume that
$m_1 \ge \ldots \ge m_r$. So $L = dH-m_1E_1-\ldots-m_rE_r$ and $d >
m_1+\ldots+m_e$, by hypothesis. Also, we have $C_1 = eH-E_1-\ldots-E_r$.

We use the Nakai-Moishezon criterion for ampleness. 
We first verify that $L$
meets all irreducible curves $D$ on $X$ positively. 
If
$D=C_1$, then $L \cdot D >0$ by hypothesis.

So assume
that $D \ne C_1$. Write $D = fH-n_1E_1-\ldots-n_rE_r$ 
with $f \ge 0$. If $f=0$, then $D = -n_1E_1-\ldots-n_r E_r$ is
effective and this implies that $D=E_i$ for some $i$. Indeed, first
note that not all $n_i$ can be non-negative, because in that case $D$
is negative of an effective curve. If $n_i < 0$ for some $i$, then $D
\cdot E_i = n_i < 0$. So $E_i$ is a component of $D$ and hence
$D=E_i$. By hypothesis, $L \cdot E_i = m_i > 0$.  

Assume now that $f > 0$. Since $D \ne E_i$ for any $i$, it follows
that $n_i \ge 0$ for all $i$. 
Since $D \ne C_1$, we have $ef \ge
n_1+\ldots+n_r$.

By hypothesis, $d > m_1+\ldots+m_e$. Hence we have 
\begin{eqnarray*}
df &>& m_1f+\ldots+m_ef \\
&=& m_1n_1+\ldots+m_en_e+m_1(f-n_1)+\ldots+m_e(f-n_e)\\
&\ge& m_1n_1+\ldots+m_en_e+m_{e+1}(ef-n_1-n_2-\ldots-n_e)  \\ 
&\ge& m_1n_1+\ldots+m_en_e+m_{e+1}(n_{e+1}+\ldots+n_r)  \\
&\ge&
      m_1n_1+\ldots+m_en_e+m_{e+1}n_{e+1}+m_{e+2}n_{e+2}+\ldots+m_rn_r.
\end{eqnarray*}

Hence $L\cdot D > 0$.

The condition $L^2 > 0$ follows now because of 
Proposition \ref{standard-is-effective}, which says that $L$ is
effective if $L\cdot H \ge L\cdot (E_1+\ldots+E_e)$. 
\end{proof}

\begin{proposition}\label{standard-is-effective}
With $X$ as in Theorem \ref{main}, let $L = dH-m_1E_1-\ldots-m_rE_r$
be a line bundle on $X$ with $m_1\ge m_2 \ge \ldots \ge m_r$.
If $d \ge m_1+\ldots+m_e$, then $L$ is effective. 
\end{proposition}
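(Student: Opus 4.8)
I assume throughout that the multiplicities are non-negative; this is the case needed for Theorem~\ref{main}, where $m_i=L\cdot E_i>0$. The plan is to deduce effectivity of $L$ from the existence of a single plane curve of degree $d$ having prescribed multiplicity at the $p_i$, and then to exhibit such a curve concretely out of copies of $C$ and of lines. First I would record the reduction: if $F\subset\proj^2$ is a curve of degree $d$ with $\mathrm{mult}_{p_i}(F)=:\mu_i\ge m_i$ for all $i$, let $F_1$ denote its proper transform on $X$. Then $F_1=dH-\sum_i\mu_iE_i$ is an effective divisor, and since $\mu_i\ge m_i$ the difference $L-F_1=\sum_i(\mu_i-m_i)E_i$ has non-negative coefficients, hence is effective; therefore $L=F_1+\sum_i(\mu_i-m_i)E_i$ is effective. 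So it suffices to construct one such $F$.

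To build $F$, set $b_j:=m_j-m_{j+1}$ for $1\le j\le r-1$ and $b_r:=m_r$, so that every $b_j\ge 0$ and $m_i=\sum_{j\ge i}b_j$. For each $j$ I choose a curve $\Gamma_j$ through $p_1,\ldots,p_j$: if $j\ge e$ I take $\Gamma_j=C$, which passes through every $p_i$ with multiplicity exactly $1$ since the $p_i$ are smooth points of $C$; if $j<e$ I take $\Gamma_j$ to be a union of $j$ general lines, the $i$-th through $p_i$, so that $\deg\Gamma_j=j$ and $\mathrm{mult}_{p_i}(\Gamma_j)=1$ for $i\le j$ and $0$ otherwise. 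Put $F_0:=\sum_{j=1}^r b_j\Gamma_j$, and finally adjoin $d-\deg(F_0)$ general lines missing all the $p_i$ to obtain a curve $F$ of degree exactly $d$ with $\mathrm{mult}_{p_i}(F)=\mathrm{mult}_{p_i}(F_0)$.

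The heart of the argument is the degree count, and this is where the hypothesis is used. A double count gives
\[
\deg(F_0)=\sum_{j=1}^{r}b_j\deg(\Gamma_j)=\sum_{j=1}^{r}\min(j,e)\,b_j=\sum_{i=1}^{e}m_i,
\]
so the assumption $d\ge m_1+\ldots+m_e$ is exactly the statement that $d-\deg(F_0)\ge 0$, i.e. that there is room for the padding lines. Alongside this I would verify the multiplicities of $F_0$: for $i\le e$ one gets $\mathrm{mult}_{p_i}(F_0)=\sum_{j\ge i}b_j=m_i$, while for $i>e$ the copies of $C$ alone already contribute $\sum_{j\ge e}b_j\ge\sum_{j\ge i}b_j=m_i$. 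I expect these two computations---the degree identity and the multiplicity bound---to be the only real content; the generality of the auxiliary lines (so that each meets its assigned point with multiplicity $1$ and avoids the others) is harmless because the $p_i$ are distinct.

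The same mechanism can instead be phrased as an induction on $r$. For $r\le e$ a union of $m_i$ general lines through each $p_i$, padded to degree $d$, already does the job. For $r>e$ one subtracts $m_rC_1$, where $C_1=eH-E_1-\ldots-E_r$ is effective, leaving $(d-em_r)H-\sum_{i=1}^{r-1}(m_i-m_r)E_i$, a divisor of the same shape on the blow up at $p_1,\ldots,p_{r-1}$; the inequality $d\ge m_1+\ldots+m_e$ survives this reduction, so induction finishes. Either route rests on the one non-formal point: the curve $C$ buys multiplicity $1$ at as many as $r$ points for a cost of only $e$ in degree.
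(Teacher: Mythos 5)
Your argument is correct and is essentially the paper's own proof in geometric clothing: your curves $\Gamma_j$ (unions of $j$ lines for $j<e$, copies of $C$ for $j\ge e$) together with the padding lines realize exactly the paper's decomposition of $L$ as a non-negative combination of the effective classes $H$, $H-E_1$, \ldots, $(e-1)H-E_1-\cdots-E_{e-1}$, and $eH-E_1-\cdots-E_i$ for $i\ge e$, with the same telescoping coefficients $b_j=m_j-m_{j+1}$; your closing induction subtracting $m_rC_1$ is likewise the paper's induction on $s$. Your explicit standing assumption $m_i\ge 0$ is a reasonable (and in fact necessary) reading of the hypotheses, matching the use of the proposition in Theorem \ref{main}.
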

\begin{proof}
This proposition is completely analogous to \cite[Lemma 1.4]{H1} and has
the same proof. We give a proof for completeness. 

We show that $L$ can be written as a non-negative linear combination
of line bundles of the form $H$, $H-E_1$, $2H-E_1-E_2,\ldots,
(e-1)H-E_1-\ldots-E_{e-1}$, $eH-E_1-\ldots-E_i$ for
$i=e,e+1,\ldots,r$. Since all these are effective, so is $L$. 

Let $s$ be the largest index in $\{1,2,\ldots,r\}$ such that $m_s \ne 0$. 
If $s \le e-1$, we 
have 

$L =
m_s(sH-E_1-\ldots-E_s)+(m_{s-1}-m_s)\left((s-1)H-E_1-\ldots-E_{s-1}\right)+
\ldots+(m_1-m_2)(H-E_1)+(d-m_1-m_2-\ldots-m_s)H$.

If $s \ge e$, let $L' =
L-m_s(eH-E_1-\ldots-E_s)$. Then it is easy to see that $L'$ satisfies
the hypotheses of the proposition and the value of $s$ is smaller for it. So we are done
by induction on $s$. 
\end{proof}
\begin{remark}\label{collinear}
In Theorem \ref{main}, the only hypothesis which is not in general
necessary for ampleness is the condition that $L\cdot H >
L\cdot(E_1+\ldots+E_e)$. But this condition is necessary if $e$ points
among $p_1,\ldots,p_r$ are collinear (assume that $e < r$). For instance, consider a line
$l$ in $\proj^2$
that meets $C$ in $e$ distinct smooth points, say $p_1,\ldots,p_e$.
Then choose any other $r-e$
points. If $L$ is ample, then $L$ meets the proper transform of $l$
positively. So we have $L\cdot H >
L\cdot(E_1+\ldots+E_e)$ after permuting the exceptional divisors, if
necessary. Thus in order to be ample for {\it all} choices of $r$
distinct smooth points on $C$, $L$ must satisfy $L\cdot H >
L\cdot(E_1+\ldots+E_e)$ after a suitable permutation of 
$E_1,\ldots,E_r$.

\end{remark}
In Corollary \ref{big-r}, we address the {\it
  uniform} case (i.e., $m_1=\ldots=m_r=m$), where we can make more
precise statements. 

\begin{corollary}\label{big-r}
Let $X$ be as in Theorem \ref{main}. Let $L
= dH-m\sum_{i=1}^r E_i$ be a
line bundle on $X$. If $r \ge e^2$, $L$ is ample if and only if $L \cdot C_1 >
0$. If $r < e^2$ then $L$ is ample if $d> em$. 
\end{corollary}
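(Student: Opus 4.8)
The plan is to deduce both statements directly from Theorem \ref{main} by specializing its three hypotheses to the uniform case and then analyzing the resulting numerical conditions. First I would record the relevant intersection numbers. Since $H^2 = 1$, $H \cdot E_i = 0$, and $E_i \cdot E_j = -\delta_{ij}$, one has $L \cdot H = d$, $L \cdot E_i = m$, and, using $C_1 = eH - E_1 - \ldots - E_r$, the value $L \cdot C_1 = de - mr$. Thus in the uniform case the three hypotheses of Theorem \ref{main} read: $m > 0$; $L \cdot C_1 > 0$, that is $de > mr$; and $d > em$ (the condition $L \cdot H > L \cdot (E_{i_1} + \ldots + E_{i_e})$ being independent of the chosen indices). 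I would also note at the outset that ampleness forces $m = L \cdot E_i > 0$, since each $E_i$ is an irreducible curve, so throughout one may take $m > 0$.

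For the case $r \ge e^2$, the ``only if'' direction is immediate: $C_1$ is the proper transform of the irreducible and reduced curve $C$, hence irreducible, so an ample $L$ must satisfy $L \cdot C_1 > 0$. For the ``if'' direction I would assume $L \cdot C_1 > 0$, that is $de > mr$, and use $r \ge e^2$ to obtain $de > mr \ge me^2$, whence $d > em$. The two remaining hypotheses of Theorem \ref{main}, namely $m > 0$ and $L \cdot C_1 > 0$, hold by assumption, so Theorem \ref{main} yields that $L$ is ample.

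For the case $r < e^2$, I would assume $d > em$ and run the implication in the opposite direction: multiplying by $e$ gives $de > e^2 m > mr$ (using $r < e^2$ and $m > 0$), so $L \cdot C_1 = de - mr > 0$. Together with $d > em$ and $m > 0$, the hypotheses of Theorem \ref{main} are satisfied, and $L$ is again ample.

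The substance of the corollary, and the only step requiring any care, is the numerical pivot at $r = e^2$: the two conditions coming from Theorem \ref{main}, namely $de > mr$ (positivity against $C_1$) and $d > em$ (positivity against proper transforms of lines through $e$ of the points), satisfy opposite implications on the two sides of $r = e^2$. When $r \ge e^2$ the first implies the second, so $L \cdot C_1 > 0$ alone suffices and is also necessary; when $r < e^2$ the second implies the first, so $d > em$ alone suffices. The remaining point to watch is the standing assumption $m > 0$, which I would justify by the irreducibility of the $E_i$ as above, since $L \cdot C_1 > 0$ on its own does not force it.
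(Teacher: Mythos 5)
Your proposal is correct and follows essentially the same route as the paper: both deduce the corollary immediately from Theorem \ref{main} by observing that $L\cdot C_1>0$ together with $r\ge e^2$ yields $de>rm\ge e^2m$, hence $d>em$, while conversely $d>em$ together with $r<e^2$ yields $de>e^2m>rm$. You are in fact slightly more explicit than the paper about the trivial ``only if'' direction and about the standing assumption $m>0$ (which the corollary, like the paper's proof, tacitly inherits from the hypothesis $L\cdot E_i>0$ of Theorem \ref{main}).
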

\begin{proof}
The proof is immediate from Theorem \ref{main}. 
Indeed, when $r \ge e^2$, the hypothesis gives $de>rm$ which implies
$d> \frac{r}{e}m \ge m$. When $r<e^2$, the hypothesis gives $de>e^2m>
rm$. 
\end{proof}

We note that when $r < e^2$ the condition $d>em$ is also {\em
  necessary} if $e$ of the points are collinear. See Remark
\ref{collinear}.

\begin{corollary}\label{nef}
With the set-up as in Theorem \ref{main}, $L$ is nef if $L\cdot C_1
\ge 0$ and $L\cdot H \ge L\cdot (E_1+\ldots+E_e)$. Further, if $r \ge
e^2$ and $L =  dH-m\sum_{i=1}^r E_i$, then $L$ is nef if and only if
$L\cdot C_1 \ge 0$. 
\end{corollary}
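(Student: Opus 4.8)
The plan is to verify nefness straight from the definition: a divisor on a surface is nef precisely when it meets every irreducible curve non-negatively, so—unlike the ample case—no separate positivity of the self-intersection is needed, and the whole argument reduces to re-running the proof of Theorem \ref{main} with every strict inequality relaxed to a non-strict one. As there, I would keep the normalization $d = L\cdot H$, $m_i = L\cdot E_i$ with $m_1 \ge \cdots \ge m_r$, so that the two hypotheses read $d \ge m_1 + \cdots + m_e$ (after a permutation) and $L\cdot C_1 \ge 0$, together with $m_i \ge 0$ inherited from the set-up.

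For an arbitrary irreducible curve $D = fH - n_1E_1 - \cdots - n_rE_r$ I would split into the same cases. If $D = C_1$ or $D = E_i$ then $L\cdot D \ge 0$ is immediate from $L\cdot C_1 \ge 0$ and from $m_i \ge 0$ respectively; the case $f = 0$ again forces $D = E_i$ by the effectivity argument used in Theorem \ref{main}. For $f > 0$ and $D \ne C_1$ one has $n_i \ge 0$ for all $i$ and $ef \ge n_1 + \cdots + n_r$ exactly as before. The chain of inequalities bounding $df$ from below then goes through verbatim, the only change being that its first step, previously $df > m_1f + \cdots + m_ef$, becomes the non-strict $df \ge m_1f + \cdots + m_ef$ obtained by multiplying $d \ge m_1 + \cdots + m_e$ by $f > 0$; every subsequent step was already an instance of $\ge$. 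This yields $df \ge \sum_i m_in_i$, i.e. $L\cdot D \ge 0$.

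The one point deserving care is the step passing from $\sum_{i\le e} m_i(f - n_i)$ to $m_{e+1}\bigl(ef - \sum_{i\le e} n_i\bigr)$, which requires $f - n_i \ge 0$ for $i \le e$; this holds because $D$ meets the proper transform $H - E_i$ of a general line through $p_i$ non-negatively (two distinct irreducible curves), giving $n_i \le f$, while $m_i - m_{e+1} \ge 0$ by the ordering. Granting this, each displayed inequality is a genuine non-strict inequality and no hidden strictness is lost.

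For the uniform statement with $r \ge e^2$ and $L = dH - m\sum_{i=1}^r E_i$, the forward implication is trivial since $C_1$ is irreducible. For the converse I would recover the hypotheses of the first part: $L\cdot C_1 \ge 0$ reads $de \ge rm$, and since $r \ge e^2$ and $m \ge 0$ this gives $de \ge e^2m$, hence $d \ge em$ after dividing by $e > 0$, which is exactly $L\cdot H \ge L\cdot(E_1 + \cdots + E_e)$. Applying the first part then concludes nefness. The main obstacle is merely the bookkeeping of which inequalities may be taken non-strict; once $n_i \le f$ is in hand, the argument is a faithful relaxation of Theorem \ref{main}.
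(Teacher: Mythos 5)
Your proposal is correct and follows exactly the route the paper intends: the paper's proof of Corollary \ref{nef} simply declares the result ``immediate from the proof of Theorem \ref{main} and Corollary \ref{big-r},'' i.e.\ one relaxes every strict inequality in that chain to a non-strict one and notes that no $L^2>0$ check is needed for nefness. Your write-up just makes explicit the details the paper leaves implicit (in particular the justification $n_i\le f$ for the passage to $m_{e+1}(ef-\sum_{i\le e}n_i)$, and the derivation $de\ge rm\ge e^2m\Rightarrow d\ge em$ in the uniform case), all of which check out.
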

\begin{proof}
For
nefness, we only need to check that $L \cdot D \ge 0$ for all
effective curves. This is immediate from the proof of Theorem
\ref{main} and Corollary \ref{big-r}.
\end{proof}

\begin{remark}
In \cite{H1,H2}, Harbourne considers the case $e=3$. He defines a line
bundle $L=dH-m_1E_1-\ldots-m_rE_r$ to be {\it standard} with respect to
the {\it exceptional configuration} $\{H,E_1,\ldots,E_r\}$ if $d \ge
m_1+m_2+m_3$. Our hypothesis that $d\ge m_1+\ldots+m_e$ may be
considered as a generalization of the notion of standardness to the
case of arbitrary $e$. Harbourne defines $L$ to be {\it excellent}
if it is standard and $L\cdot C_1 > 0$. Suppose that $m_i > 0$ for
every $i$. One of the main results in
\cite{H1,H2} says that $L$ is ample {\em if and only if} it is excellent
with respect to some exceptional configuration. See \cite{H1,H2} for
more details. 

Our main Theorem \ref{main} may be considered as a generalization of
one direction of this result to the case of arbitrary $e$. The converse is not true
when $e \ge 4$. See Example \ref{e=4}.
\end{remark}

\begin{example}\label{e=3} 
Take $e=3$ and $r=10$. Let $L =
8H-3(E_1+E_2+E_3)-2(E_4+\ldots+E_{10})$. Consider  the following transformation:

$H\mapsto 2H-E_1-E_2-E_3$, 
$E_1\mapsto H-E_2-E_3$,
$E_2\mapsto H-E_1-E_3$, 
$E_3\mapsto H-E_1-E_2$, and 
$E_i \mapsto E_i$ for $i = 4,\ldots,10$.

Under this, $L$ is transformed to $7H-2(E_1+\ldots+E_{10})$. This is standard, in
fact excellent, in
the sense of \cite{H1,H2}. So $L$ is ample. One can also check that
$L$ is ample as in Example \ref{e=4}.  This example illustrates one of
the main theorems in \cite{H1} which says that a line bundle $L$ is ample if and
only if $L\cdot C_1 > 0$ and $L$ is standard with respect to {\it some}
exceptional configuration.

\end{example}

\begin{example}\label{optimal}
Let $e=4$ and $r=17$. So $X$ is the blow up of $\proj^2$ at 17
distinct smooth points on an irreducible,
reduced plane quartic. Let $L =
11H-3(E_1+E_2+\ldots +E_{13})-(E_{14}+\ldots+E_{17})$. Then $L\cdot
C_1 = 44-39-4=1$, but $L^2=121-117-4=0$. So $L$ is not ample. 
Here note that $d=11 < m_1+m_2+m_3+m_4=12$. 
So the
hypotheses in Theorem \ref{main} can not be weakened. 
\end{example}

\begin{example}\label{e=4}
In this example, we show that the hypotheses of Theorem \ref{main} are
not always necessary for ampleness. 
Let $C$ be an irreducible and reduced plane quartic and let
$p_1,\ldots,p_{18}$ be distinct smooth points on $C$ such that no four are
collinear. 
Let $L =
10H-3(E_1+E_2+E_3)-2(E_4+\ldots+E_{18})$. Since $d= 10< 3+3+3+2=11$, the
hypotheses of Theorem \ref{main} are not
satisfied. However, we claim that $L$ is ample. 

It is easy to check that $L^2 = 13$ and $L\cdot C_1 = 1$. So let $D
\ne C_1$ be an irreducible and reduced curve. Write $D =
fH-\sum_{i=1}^{18} n_iE_i$. 

If $f=0$, we may assume that 
$n_i \ne 0$ for some $i$. We claim in fact that $n_i \le 0$ for all $i$. 
Since $D=-(\sum_i n_iE_i)$ is effective,
$n_i$ can not all be non-negative, as in that case $D$ is negative
of an effective divisor. So $n_i < 0$ for some $i$. Then $D\cdot E_i =
n_i < 0$, so that $E_i$ is a component of $D$. Subtracting $E_i$ from
$D$ for all $i$ with $n_i < 0$, we obtain an effective divisor of the
form $\sum\limits_{n_j > 0}(-n_j) E_j$, which must be the zero divisor.  
Hence $L \cdot D = \sum_i (-n_i) > 0$. 

Now let $f \ge 1$ and $n_1, 
\ldots, n_{18} \ge 0$. In fact, if $f=1$, since no four points are
collinear, we have  $L
\cdot D > 0$. So let $f \ge 2$.

Since $C_1$ and $D$ are distinct irreducible curves, $C_1 \cdot D = 4f
-\sum_{i=1}^{18} n_i \ge 0$. 
Then 
$L\cdot D =
10f-3(n_1+n_2+n_3)-2(n_4+\ldots+n_{18}) =  \left(8f-2\sum_{i=1}^{18}n_i\right)
+2f-(n_1+n_2+n_3) \ge 2f-(n_1+n_2+n_3).$
If $n_1+n_2+n_3 = 0$, it
follows that $L \cdot D > 0$. Otherwise, without loss of generality,
let $n_1 > 0$. 
Intersecting $D$ with
the proper transforms of the line through $p_1,p_2$ and the line 
through $p_1,p_3$, we get $f \ge n_1+n_2$ and $f\ge n_1+n_3$. Hence 
$2f\ge 2n_1+n_2+n_3 > n_1+n_2+n_3$. The last inequality holds because $n_1
> 0$. Thus $L \cdot D > 0$. 

Though $L = 10H-3(E_1+E_2+E_3)-2(E_4+\ldots+E_{18})$ in
this example is ample, it is easy to see that $L$ does not satisfy the condition $d \ge
m_1+m_2+m_3+m_4$ with respect to {\em any} exceptional configuration. This
is easy to check by direct calculation. Note also that $L$ is already
standard in the sense of Harbourne \cite{H1,H2}. 
\end{example}

\section{$k$-very ampleness}\label{k-very-ample}

Let $C, p_1,\ldots,p_r, X$ be as in Section \ref{ample}. In this
section we consider a 
line bundle $L =dH-m\sum_{i=1}^r E_i$ on
$X$ and 
investigate $k$-very ampleness of $L$ for a non-negative integer
$k$. We make the assumption that the number of points we blow up is
large compared to $e$. Specifically, we assume that $r \ge e^2+k+1$.

First, we consider the question of global generation.
In other words, we assume $k=0$. 
Our arguments for $k=0$ give a flavour of our arguments in the 
case $k \ge 1$, which we consider later. 

We will use Reider's theorem \cite[Theorem 1]{Re} which gives conditions for global
generation and very ampleness. We only state the conditions for global
generation below. 

\begin{theorem}[Reider]\label{reider}
Let $X$ be a smooth complex surface and let $N$ be a nef line bundle on $X$ with $N^2
\ge 5$. If $K_X+N$ is not
globally generated (here $K_X$ is the canonical line bundle of $X$), then there exists an effective divisor $D$ on $X$ such
that 
$$D\cdot N = 0, D^2=-1, {\rm ~or~} 
D\cdot N = 1, D^2 = 0.$$
\end{theorem}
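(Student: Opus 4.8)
The plan is to argue by contradiction along the classical route to Reider-type statements, namely the Serre construction combined with Bogomolov instability. Suppose $K_X+N$ fails to be globally generated. Since $N$ is nef with $N^2\ge 5>0$, it is nef and big, so Kawamata--Viehweg (Mumford) vanishing gives $H^1(X,K_X+N)=0$. Consequently the failure of global generation must occur at an honest point: there is a length-one subscheme $Z=\{x\}$ for which $H^0(X,K_X+N)\to H^0(Z,(K_X+N)\otimes\str_Z)$ is not surjective. Feeding the ideal-sheaf sequence
\[
0\to (K_X+N)\otimes\mathcal{I}_Z\to K_X+N\to (K_X+N)\otimes\str_Z\to 0
\]
into cohomology and using the vanishing above, this failure is equivalent to $H^1\bigl(X,(K_X+N)\otimes\mathcal{I}_Z\bigr)\neq 0$.

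First I would turn this nonvanishing into an extension class. By Serre duality on the surface $X$,
\[
H^1\bigl(X,(K_X+N)\otimes\mathcal{I}_Z\bigr)^{\vee}\cong\operatorname{Ext}^1\bigl(\mathcal{I}_Z\otimes N,\str_X\bigr),
\]
so our assumption yields a nonzero class in $\operatorname{Ext}^1(\mathcal{I}_Z\otimes N,\str_X)$. Because $Z$ has length one, the Cayley--Bacharach condition required for the Serre construction is automatic, and this class produces a rank-two vector bundle $E$ fitting in
\[
0\to\str_X\to E\to\mathcal{I}_Z\otimes N\to 0.
\]
Reading off Chern classes gives $c_1(E)=N$ and $c_2(E)=\operatorname{length}(Z)=1$, whence the discriminant satisfies $c_1(E)^2-4c_2(E)=N^2-4\ge 1>0$; note that this is exactly where the hypothesis $N^2\ge 5$ is used, since it forces strict positivity of the discriminant.

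Next I would invoke Bogomolov's instability theorem: a positive discriminant forces $E$ to be unstable, so there is a saturated sub-line-bundle $\str_X(A)\hookrightarrow E$ with quotient $\mathcal{I}_W\otimes\str_X(B)$ for some zero-dimensional $W$, satisfying $A+B\equiv N$, $A\cdot B+\operatorname{length}(W)=1$, and $(A-B)^2>0$ with $A-B$ in the positive cone. The final step is to extract the advertised effective divisor $D$ and pin down its intersection numbers. I would split into cases according to whether the composite $\str_X\to E\to\mathcal{I}_W\otimes\str_X(B)$ vanishes: if it does, then $\str_X\to E$ factors through $\str_X(A)$, making $A$ effective; otherwise $\str_X(A)$ maps nontrivially into $\mathcal{I}_Z\otimes N$, making $B$ effective. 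Setting $D$ equal to whichever of $A,B$ is effective, I would then combine the relations $A\cdot B+\operatorname{length}(W)=1$ and $(A-B)^2>0$ with the Hodge index theorem (available since $N$ is nef and big) to force $D\cdot N\in\{0,1\}$, with $D^2=-1$ in the first case and $D^2=0$ in the second.

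The main obstacle I anticipate is precisely this last case analysis. Bogomolov instability only supplies inequalities, and squeezing them---via the Hodge index theorem and the length-one constraint $A\cdot B+\operatorname{length}(W)=1$---down to the two exact alternatives $\{D\cdot N=0,\ D^2=-1\}$ and $\{D\cdot N=1,\ D^2=0\}$ requires care to rule out intermediate possibilities and to confirm that the chosen $D$ is genuinely effective. Since the statement is Reider's theorem, within the paper itself one would of course simply cite \cite{Re} rather than reproduce this argument, but the sketch above is the proof I would reconstruct.
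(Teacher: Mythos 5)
The paper offers no proof of this statement: it is Reider's theorem, quoted (in its global-generation half) from \cite{Re}, whose original argument is precisely the Serre-construction-plus-Bogomolov-instability route you outline, so your sketch matches the source and is essentially correct. The one imprecision is that the Cayley--Bacharach condition for a length-one $Z=\{x\}$ is not ``automatic'' but is exactly the assertion that every section of $K_X+N$ vanishes at $x$, which is what the failure of global generation at $x$ (together with $H^1(X,K_X+N)=0$) supplies.
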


The following is our theorem on global generation. 

\begin{theorem}\label{bpf}
Let $C$ be an irreducible and reduced plane curve of
  degree $e$ and let $X \to \proj^2$ be the blow up of $\proj^2$ at $r$
  distinct smooth points $p_1,\ldots,p_r \in C$.  Let $H$
denote the pull-back of $\str_{\proj^2}(1)$ and $E_1,\ldots,E_r$
the inverse images of $p_1,\ldots,p_r$ respectively. 
Let $L =dH-m\sum_{i=1}^r E_i$ be a line bundle on $X$ with $m\ge 0$. 

If $(d+3)e>r(m+1)$ and $r \ge e^2+1$, then 
$L$ is globally generated.
\end{theorem}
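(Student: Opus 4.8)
The plan is to apply Reider's theorem (Theorem \ref{reider}) after expressing $L$ as $K_X + N$ for a suitable nef line bundle $N$. Since $X$ is the blow up of $\proj^2$ at $r$ points, $K_X = -3H + \sum_{i=1}^r E_i$, so setting $N := L - K_X = (d+3)H - (m+1)\sum_{i=1}^r E_i$ gives $L = K_X + N$. The first task is to verify the hypotheses of Reider's theorem for $N$, namely that $N$ is nef with $N^2 \ge 5$.

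For nefness I would invoke Corollary \ref{big-r}. Since $N$ has the uniform form with multiplicity $m+1 > 0$ and $r \ge e^2+1 \ge e^2$, that corollary reduces ampleness of $N$ to the single inequality $N \cdot C_1 > 0$. A direct computation gives $N \cdot C_1 = (d+3)e - r(m+1)$, which is positive by the hypothesis $(d+3)e > r(m+1)$. Hence $N$ is in fact ample, and in particular nef. Its self-intersection is $N^2 = (d+3)^2 - r(m+1)^2$; using $(d+3) > r(m+1)/e$ together with $r \ge e^2+1$, one checks that $N^2 \ge 5$ in all cases except the single degenerate one $e=2$, $r=5$, $m=0$, $d=0$, where $L = \str_X$ is trivially globally generated. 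I would dispose of that case first and assume $N^2 \ge 5$ henceforth.

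With $N$ nef and $N^2 \ge 5$, Reider's theorem says that if $L = K_X + N$ fails to be globally generated, then there is an effective divisor $D$ with either $D\cdot N = 0$, $D^2 = -1$, or $D \cdot N = 1$, $D^2 = 0$. The first possibility is eliminated immediately, because $N$ is ample and so $N \cdot D > 0$ for every nonzero effective $D$. For the second possibility, ampleness of $N$ forces $D$ to be irreducible and reduced: every effective $D$ is a sum of irreducible components, each meeting the ample $N$ in a positive integer, so $D \cdot N = 1$ can occur only for a single reduced component. Writing $D = fH - \sum_{i=1}^r n_i E_i$, the cases $D = E_i$ and $D = C_1$ are excluded since they have $D^2 = -1$ and $D^2 = e^2 - r < 0$ respectively, neither equal to $0$; thus $D$ is an irreducible curve with $f \ge 1$, $n_i \ge 0$, and $D \ne C_1$.

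The heart of the argument, and the step I expect to be the \emph{main obstacle}, is to show that such a $D$ must satisfy $N \cdot D \ge 2$, contradicting $N \cdot D = 1$. Two elementary estimates on $\sum n_i$ are available: from $D \cdot C_1 \ge 0$ we get $\sum n_i \le ef$, and from $D^2 = 0$, i.e. $\sum n_i^2 = f^2$, together with $n_i \ge 0$ integral (so $n_i \le n_i^2$), we get $\sum n_i \le f^2$. The hypothesis rearranges to $(d+3) > (m+1)(e + 1/e)$. For $f \ge e$ I would use $\sum n_i \le ef$ to obtain $N\cdot D \ge f[(d+3)-(m+1)e] > f(m+1)/e \ge m+1 \ge 1$; for $1 \le f \le e-1$ I would instead use $\sum n_i \le f^2$ to obtain $N \cdot D \ge f[(d+3)-(m+1)f] > f(m+1)(1+1/e) > 1$. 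In either regime $N \cdot D$ is an integer strictly greater than $1$, hence at least $2$, contradicting Reider's second alternative. The delicate point is precisely the small-degree regime $f < e$, where the estimate from $C_1$ alone is too weak and one genuinely needs the constraint $\sum n_i \le f^2$ coming from $D^2 = 0$.
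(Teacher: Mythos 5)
Your proposal is correct, and it follows the same overall framework as the paper's proof of Theorem \ref{bpf} (set $N=L-K_X$, check $N$ is ample with $N^2\ge 5$, apply Reider, and rule out the exceptional divisor $D$ with $N\cdot D=1$, $D^2=0$), but the crucial final step is handled by a genuinely different argument. The paper works with an arbitrary effective $D=fH-\sum n_iE_i$, shows $n_i\ge 0$, and then splits on $n=\sum n_i\ge r$ versus $n<r$, using the Cauchy--Schwarz bound $f^2=\sum n_i^2\ge n^2/r$ in the first case and $f^2\ge n$ in the second, reducing everything to elementary estimates on the functions $\frac{n\sqrt r}{e}-n$ and $\frac{r\sqrt n}{e}-n$. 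You instead first observe that ampleness of $N$ together with $N\cdot D=1$ forces $D$ to be a single reduced irreducible curve, which gives you access to the geometric inequality $D\cdot C_1\ge 0$, i.e.\ $\sum n_i\le ef$; you then split on $f\ge e$ versus $1\le f\le e-1$, using $\sum n_i\le ef$ in the first regime and $\sum n_i\le f^2$ (from $D^2=0$ and integrality) in the second, and in both regimes conclude $N\cdot D>1$ directly. Both arguments are valid; yours is somewhat cleaner and more geometric, and it has the added virtue of covering $m=0$ within the main argument (modulo the single degenerate case $e=2$, $r=5$, $d=m=0$ with $N^2=4$, which you correctly isolate and where $L=\str_X$ is trivially globally generated), whereas the paper dismisses $m=0$ at the outset and uses $m\ge 1$ repeatedly (e.g.\ in $\frac{m+1}{2}\ge 1$). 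The paper's purely numerical treatment of a general effective $D$, on the other hand, is the template it reuses for the $k$-very ampleness results (Propositions \ref{n-greater-than-r} and \ref{proper-with-c}), where the Beltrametti--Francia--Sommese criterion no longer forces $D$ to be irreducible.
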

\begin{proof}
Since the conclusion holds if $m=0$, we
assume $m \ge 1$. 

Let $N = L-K = (d+3)H-(m+1)\sum_{i=1}^r E_i$. 
If $C_1$ denotes the proper transform of $C$ on
$X$, then $N\cdot C_1 = (d+3)e-r(m+1) > 0$, by hypothesis. Moreover,
$d+3 > \frac{r}{e}(m+1) > e(m+1)$, since $r> e^2$. So the
hypotheses of Theorem \ref{main} hold and $N$ is
ample. 

Further, $N^2 = (d+3)^2-r(m+1)^2 > \left(\frac{r^2}{e^2}-r\right)(m+1)^2 >
(m+1)^2 \ge 4$. This is because $\frac{r^2}{e^2}-r\ge
r(\frac{e^2+1}{e^2})-r = r(1+\frac{1}{e^2})-r = \frac{r}{e^2} >
1$. So we can apply Reider's
Theorem \ref{reider}. 

Suppose that $L$ is not globally generated. 
By Theorem \ref{reider}, there is an effective divisor $D$
such that $D\cdot N = 1, D^2 = 0$.  
Since $N$ is ample, this is the only 
possibility. Writing $D = fH-\sum_{i=1}^r n_iE_i$ and setting $n = \sum_{i=1}^r n_i$, 
we have $$D \cdot N = (d+3)f - n(m+1) = 1 \text{~and~} D^2 = f^2 -\sum_{i=1}^r
n_i^2 = 0.$$

Note that if $n_i < 0$ for some $i$, then $D \cdot E_i < 0$. Thus $E_i$ is a
component of $D$ and $D-E_i$ is effective. But then we get a
contradiction because $1 = D\cdot N =
N\cdot (D-E_i) + N\cdot E_i \ge m+1 \ge 2$ (since $N$ is ample). Hence
$n_i \ge 0$ for all $i$ and $n =\sum_i n_i \ge 0$. Since $D^2 =0$, in fact $n > 0$. 

We consider two different cases: $n \ge r$ or $n < r$. 

First, suppose that $n \ge r$. Since $f^2
= \sum_i n_i^2 \ge \frac{n^2}{r}$ and by hypothesis, $d+3 >
\frac{r}{e}(m+1)$,  we have 
\begin{eqnarray}\label{3.1-a}
1 = D\cdot N = (d+3)f-n(m+1) >
\frac{n\sqrt{r}}{e}(m+1)-n(m+1).  
\end{eqnarray}

We claim that  $\frac{n\sqrt{r}}{e}-n \ge 1/2$ for any fixed $e$, all
$r\ge e^2+1$
and for all $n \ge r$. Since this is a linear function in $n$ of
positive slope, it suffices to show that the function is non-negative
when $n=r$. That is, we only have to show that $\frac{(\sqrt{r})r}{e}-r
\ge 1/2$ for $r \ge e^2+1$. For a fixed $e$, this function is
increasing for $r > 0$. So it suffices to show that 
$\frac{\sqrt{e^2+1}(e^2+1)}{e}-(e^2+1) \ge 1/2$. It is easy to 
see that this inequality holds for $e \ge 1$, for example by
clearing the denominator and squaring. 

Thus, by \eqref{3.1-a},  $ 1>\left(\frac{n\sqrt{r}}{e}-n\right)(m+1)
\ge \frac{m+1}{2} \ge 1$, which is a contradiction. 

Finally, we consider the case $n < r$. We have $f^2 = \sum_{i=1}^r n_i^2 \ge
n$. So $1 = N\cdot D = (d+3)f-n(m+1) \ge \left(\frac{r\sqrt{n}}{e}
  -n\right)(m+1)$. We claim that $\frac{r\sqrt{n}}{e}
  -n \ge 1$, which as above leads to a contradiction. 

We view $\frac{r\sqrt{n}}{e} -n$ as a quadratic in $\sqrt{n}$. Since
the leading coefficient is -1, it is a downward sloping parabola. If
we show that the value of this function is at least 1 for $n=1$ and
$n=r-1$, then it follows that  the value of the function is at least 1
for all $1 \le n \le r-1$. This can be easily verified. 

We conclude that $L$ is globally generated. 
\end{proof}

Now we will consider the case $k \ge 1$. 
Recall the criterion \cite[Theorem 2.1]{BFS} of Beltrametti-Francia-Sommese for $k$-very
ampleness of $L$, which generalizes Reider's criterion in Theorem \ref{reider}. 

\begin{theorem}[Beltrametti-Francia-Sommese]\label{bfs-theorem}
Let $N$ be a line bundle on a surface $X$. Let $k \ge 0$ be an integer. 
Suppose that $N$ is nef and $N^2 \ge 4k+5$.
If $K_X+N$ is not
$k$-very ample, then there exists an
effective divisor $D$ on $X$ such that 
\begin{eqnarray}\label{bfs}
N\cdot  D - k-1 \le D^2 < \frac{N\cdot D}{2} < k+1.
\end{eqnarray}
\end{theorem}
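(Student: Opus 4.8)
The statement is the $k$-very ample analogue of Reider's Theorem \ref{reider}, so the plan is to run the Bogomolov-instability argument for rank-two vector bundles. First I would translate the failure of $k$-very ampleness into non-vanishing cohomology: if $K_X+N$ is not $k$-very ample, there is a zero-dimensional subscheme $Z\subset X$ of length $k+1$ for which $H^0(X,K_X+N)\to H^0(Z,(K_X+N)\otimes\str_Z)$ is not surjective, and the ideal-sheaf sequence then gives $H^1(X,\mathcal{I}_Z\otimes(K_X+N))\neq 0$. Replacing $Z$ by a subscheme of minimal length with this non-vanishing, I would arrange that $Z$ has the Cayley--Bacharach property, which is exactly the condition needed to make the forthcoming extension locally free.

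Next I would use Serre duality, $\mathrm{Ext}^1(\mathcal{I}_Z(N),\str_X)\cong H^1(X,\mathcal{I}_Z\otimes(K_X+N))^{\vee}$, to turn the cohomology class into a non-split extension
\[
0\to\str_X\to\mathcal{E}\to\mathcal{I}_Z(N)\to 0 .
\]
The Cayley--Bacharach property of the minimal $Z$ forces $\mathcal{E}$ to be a locally free sheaf of rank two, and a Chern-class computation gives $c_1(\mathcal{E})=N$ and $c_2(\mathcal{E})=\mathrm{length}(Z)=k+1$. The hypothesis $N^2\ge 4k+5$ then yields the strictly positive discriminant $c_1(\mathcal{E})^2-4c_2(\mathcal{E})=N^2-4(k+1)\ge 1$, so by Bogomolov's inequality $\mathcal{E}$ must be Bogomolov unstable. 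This produces a saturated sub-line-bundle $\str_X(N-D)\subset\mathcal{E}$, with quotient $\mathcal{I}_W(D)$ for some zero-dimensional $W$ of length $w\ge 0$, such that $\xi:=N-2D$ lies in the positive cone, i.e. $\xi^2>0$ and $\xi\cdot h>0$ for an ample class $h$.

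With $D$ chosen this way the three inequalities follow numerically. From $c_2(\mathcal{E})=(N-D)\cdot D+w$ I get the clean identity $N\cdot D-D^2=(k+1)-w\le k+1$, which is the left-hand inequality. Both remaining inequalities reduce to a single positivity statement: since $\xi\cdot D=N\cdot D-2D^2$, the middle inequality $D^2<\frac{N\cdot D}{2}$ is equivalent to $\xi\cdot D>0$, while $2(k+1)-N\cdot D=\xi\cdot D+2w$ shows the right-hand inequality $\frac{N\cdot D}{2}<k+1$ also follows from $\xi\cdot D>0$ together with $w\ge 0$.

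The real content sits in two places. First, the local-freeness of $\mathcal{E}$ is not automatic; it is precisely here that the minimal choice of $Z$ and its Cayley--Bacharach property are used. Second, and most delicate, is establishing that $D$ is effective and that $\xi\cdot D>0$: the positive-cone property of $\xi$ alone does not force $\xi\cdot D\ge 0$, so I would analyze the tautological section $\str_X\hookrightarrow\mathcal{E}$ and its image in the quotient $\mathcal{I}_W(D)$ to show that $Z$ lies on an effective divisor in the class of $D$, and then combine this effectivity with the nefness of $N$, the maximality of the destabilizing subsheaf, and the Hodge index theorem to exclude $\xi\cdot D\le 0$. I expect this last step, effectivity of $D$ together with the strict positivity, to be the main obstacle.
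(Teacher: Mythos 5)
The paper does not prove this statement at all: Theorem \ref{bfs-theorem} is quoted verbatim from \cite[Theorem 2.1]{BFS} and used as a black box, so there is no in-paper proof to compare against. Your proposal reconstructs the argument of the cited source itself, namely the Reider-style Bogomolov-instability proof, and the skeleton is correct: non-surjectivity onto a length-$(k+1)$ scheme $Z$ gives $H^1(X,\mathcal{I}_Z\otimes(K_X+N))\ne 0$; Serre duality produces the extension $0\to\str_X\to\mathcal{E}\to\mathcal{I}_Z(N)\to 0$; minimality of $Z$ gives Cayley--Bacharach and hence local freeness; $c_1^2-4c_2\ge N^2-4(k+1)\ge 1$ forces Bogomolov instability; and your three numerical reductions (left inequality from $c_2=(N-D)\cdot D+w$, the other two from $\xi\cdot D>0$ with $\xi=N-2D$ and $w\ge 0$) are all correct. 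One small inaccuracy: after replacing $Z$ by a minimal subscheme on which surjectivity fails you only know $c_2(\mathcal{E})=\mathrm{length}(Z')=\ell\le k+1$, not $\ell=k+1$; this is harmless since every inequality you need is monotone in $\ell$, but the identity $N\cdot D-D^2=(k+1)-w$ should read $\ell-w$.

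The proposal is a sketch rather than a proof at exactly the two places you flag. For the record, both gaps close along standard lines: (i) local freeness is the classical Serre-construction criterion, valid because minimality of $Z'$ gives the Cayley--Bacharach condition for $K_X+N$; (ii) effectivity of $D$ comes from the composite $\str_X(N-D)\to\mathcal{E}\to\mathcal{I}_Z(N)$ --- if it is nonzero one gets $0\ne H^0(\mathcal{I}_Z(D))$, so $D$ is effective (and nonzero, since $Z\ne\emptyset$), while the vanishing case is excluded by the positivity of $\xi$ --- and then $\xi\cdot D>0$ follows from the Hodge index theorem: $\xi$ lies in the positive cone, so $\xi\cdot D\le 0$ would force $D^2\le 0$ with equality only for $D$ numerically trivial, which together with $N$ nef, $N\cdot D-D^2\le\ell$ and $D$ effective nonzero yields a contradiction. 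Since you correctly identify these as the substantive steps and name the right tools for each, I would call this the right proof with two standard but unexecuted lemmas, not a wrong approach; but as written it would not yet compile into a complete argument.
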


If \eqref{bfs} holds for an effective divisor $D$, then our next two 
results give some conditions that $D$ must satisfy.

\begin{proposition} \label{n-greater-than-r}
Let $X$ be as in Theorem \ref{bpf}. Let $L =dH-m\sum_{i=1}^r E_i$ 
and $N = L-K_X =(d+3)H-(m+1)\sum_{i=1}^r E_i$ be line bundles on $X$.
Let $k\ge 1$ be an integer.
Suppose that $r \ge e^2+k+1$, $(d+3)e > r(m+1)$ and $m\ge k$. If an
effective
divisor $D=fH-\sum_{i=1}^r
n_iE_i$ on $X$ satisfies \eqref{bfs}, then $\sum_{i=1}^r n_i < r$. 
\end{proposition}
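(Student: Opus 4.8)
The plan is to argue by contradiction: suppose $n := \sum_{i=1}^r n_i \ge r$ and derive a violation of the bounds in \eqref{bfs}. The key inequalities to work with are $N \cdot D = (d+3)f - n(m+1)$ and $D^2 = f^2 - \sum_i n_i^2$, together with the constraint $D^2 < \frac{N\cdot D}{2} < k+1$ from \eqref{bfs}. First I would record two elementary facts. Since $N$ is ample (the hypotheses $(d+3)e > r(m+1)$ and $r > e^2$ put us in the situation of Theorem \ref{main}, as in the proof of Theorem \ref{bpf}), the same argument as there shows $n_i \ge 0$ for all $i$: if some $n_i < 0$ then $E_i$ is a component of $D$, and writing $N\cdot D = N\cdot(D-E_i) + N\cdot E_i$ forces $N\cdot D \ge m+1 \ge k+2$, contradicting $N\cdot D < 2(k+1)$. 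Second, the Cauchy--Schwarz bound $\sum_i n_i^2 \ge \frac{n^2}{r}$ gives $f^2 = D^2 + \sum_i n_i^2 \ge \frac{n^2}{r}$, hence $f \ge \frac{n}{\sqrt r}$.

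Next I would feed $f \ge \frac{n}{\sqrt r}$ and the hypothesis $d+3 > \frac{r}{e}(m+1)$ into the expression for $N\cdot D$, exactly paralleling the case $n \ge r$ in the proof of Theorem \ref{bpf}. This yields
\begin{equation*}
N\cdot D = (d+3)f - n(m+1) > \left(\frac{\sqrt r}{e} - 1\right) n (m+1).
\end{equation*}
Now I use $n \ge r \ge e^2 + k + 1$. The factor $\frac{\sqrt r}{e} - 1$ is positive since $r > e^2$, and I expect the cleanest route is to bound it below using $r \ge e^2+k+1$ and then combine with $n \ge r$ and $m+1 \ge k+1$ to show the right-hand side is at least $2(k+1)$. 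This would contradict the upper bound $N\cdot D < 2(k+1)$ coming from $\frac{N\cdot D}{2} < k+1$ in \eqref{bfs}, completing the argument.

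The main obstacle is the final numerical estimate: one must verify that $\left(\frac{\sqrt r}{e} - 1\right) n (m+1) \ge 2(k+1)$ under the standing hypotheses $r \ge e^2+k+1$, $n \ge r$, and $m \ge k$. The quantity $\frac{\sqrt r}{e} - 1$ can be small (of order $\frac{1}{e^2}$ when $r$ is just above $e^2$), so the largeness of $n \ge r \ge e^2+k+1$ and of $m+1$ must compensate. I expect this reduces, after setting $n = r$ (the worst case, since the bound is increasing in $n$) and using $m+1 \ge k+1$, to showing $\left(\frac{\sqrt r}{e}-1\right) r (k+1) \ge 2(k+1)$, i.e. $\left(\frac{\sqrt r}{e}-1\right) r \ge 2$; as in Theorem \ref{bpf} this is a monotone function of $r$ for fixed $e$, so it suffices to check it at $r = e^2+1$, where it holds for all $e \ge 1$ by clearing denominators and squaring. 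The role of the hypothesis $m \ge k$ is precisely to guarantee the factor $m+1 \ge k+1$ that makes this estimate go through.
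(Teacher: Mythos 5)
There is a genuine gap here, on two counts. First, your inequality $f^2 = D^2 + \sum_i n_i^2 \ge \frac{n^2}{r}$ silently assumes $D^2 \ge 0$; but \eqref{bfs} only guarantees $D^2 \ge N\cdot D - k - 1$, which can be as low as $-k$, so you only get $f^2 \ge \frac{n^2}{r} - k$. (Your preliminary claim that $n_i \ge 0$ also does not close as written: if $E_i$ is a component of $D$ you get $N\cdot D \ge N\cdot(D-E_i) + (m+1) \ge m+2 \ge k+2$, which is perfectly compatible with $N\cdot D < 2(k+1)$ once $k \ge 1$; luckily Cauchy--Schwarz does not care about the signs of the $n_i$.) Second, and fatally, the closing numerical estimate fails. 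At $n = r = e^2+k+1$ one has $\frac{\sqrt r}{e} - 1 = \frac{k+1}{e(\sqrt r + e)}$, which is roughly $\frac{k+1}{2e^2}$, so $\bigl(\frac{\sqrt r}{e}-1\bigr)\, r\, (m+1)$ is roughly $\frac{(k+1)(m+1)}{2}$; with $m+1 = k+1$ this is $\frac{(k+1)^2}{2}$, which is below your target $2(k+1)$ whenever $k \le 2$. Concretely, for $e=3$, $k=1$, $r=11$, $m=1$, $d+3=8$, your lower bound at $n=11$ is about $2.3$, which does not contradict $N\cdot D \le 2k+1 = 3$. Your claimed check that $\bigl(\frac{\sqrt r}{e}-1\bigr) r \ge 2$ at $r=e^2+1$ is also simply false: for $e=2$, $r=5$ the left side is about $0.59$. (The analogous claim in the proof of Theorem \ref{bpf} only needed the constant $1/2$, because there the contradiction was with $N\cdot D = 1$.)

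The ingredient you never use is the lower bound $N\cdot D - k - 1 \le D^2$ in \eqref{bfs}, and by your route the proposition is out of reach without it. The paper's proof squares the identity $(d+3)f = N\cdot D + (m+1)n$, sets $\alpha = N\cdot D$ and $\beta = D^2$, and compares $(m+1)^2n^2 + 2(m+1)n\alpha + \alpha^2$ with $\frac{r^2}{e^2}(m+1)^2\beta + \frac{r}{e^2}(m+1)^2 n^2$ by analyzing the difference as a quadratic in $n$ with positive leading coefficient, checking that it is nonnegative with nonnegative derivative at $n=r$; both of the inequalities $\alpha \le \beta + k + 1$ and $\alpha < 2(k+1)$ enter that computation. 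You would need to incorporate the first of these to repair your argument.
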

\begin{proof}
Set $n=\sum_{i=1}^r n_i$. 
Let $\alpha = N\cdot D = (d+3)f-(m+1)n$ and $\beta = D^2 = f^2 -
\sum_{i=1}^r n_i^2$. If $D$ satisfies \eqref{bfs}, then we have 
\begin{eqnarray}\label{bfs-1}
\alpha - k-1 \le \beta < \frac{\alpha}{2} < k+1.
\end{eqnarray}

 By hypothesis,
$(d+3)^2 > \frac{r^2}{e^2}(m+1)^2$. Since $(d+3)f=\alpha+(m+1)n$, we have 

$$(d+3)^2f^2  = (m+1)^2n^2+2(m+1)n\alpha+\alpha^2 >
\frac{r^2}{e^2}(m+1)^2\left(\beta+\sum_{i=1}^r n_i^2\right).$$
Since $r\sum_{i=1}^r n_i^2 \ge
n^2$, we have 
\begin{eqnarray*}
(m+1)^2n^2+2(m+1)n\alpha+\alpha^2 > \frac{r^2}{e^2}(m+1)^2\beta+\frac{r}{e^2}(m+1)^2n^2.
\end{eqnarray*}

We now show that the above inequality is impossible for $n \ge r$. 
Specifically, we make the following claim.

{\bf Claim:} Let $r,e,k,\alpha, \beta$ be as in the proposition and
suppose that \eqref{bfs-1} holds. Then for $n\ge r$, we have
$$\frac{r^2}{e^2}(m+1)^2\beta+\frac{r}{e^2}(m+1)^2n^2 \ge
(m+1)^2n^2+2(m+1)n\alpha+\alpha^2.$$

{\bf Proof of Claim:} We consider the difference of the two terms in
the required inequality as a quadratic function in $n$. 
Define 
$$\lambda(n) := 
\left(\frac{r}{e^2}-1\right)(m+1)^2n^2-2(m+1)n\alpha+\frac{r^2\beta}{e^2}(m+1)^2-\alpha^2.$$
This is quadratic in $n$ with
the leading coefficient $\left(\frac{r}{e^2}-1\right)(m+1)^2 > 0$. 
We will show that $\lambda(r) \ge 0$ and $\lambda'(r) \ge 0$, which will
prove the claim and the proposition. 

\begin{eqnarray*} 
\lambda(r) &=& r^2(m+1)^2\left(\frac{r+\beta}{e^2}-1\right)
-2(m+1)\alpha r-\alpha^2\\
&\ge& r^2(m+1)^2\left(\frac{e^2+k+1+\beta}{e^2}-1\right)
-2(m+1)\alpha r-\alpha^2   \hspace{.3in} ({\rm since~} r \ge e^2+k+1)\\
&=& r^2(m+1)^2\left(\frac{k+1+\beta}{e^2}\right)
-2(m+1)\alpha r-\alpha^2\\
&\ge& r(e^2+k+1)(m+1)^2\left(\frac{k+1+\beta}{e^2}\right)
-2(m+1)\alpha r-\alpha^2\\
&=& r(m+1)^2(k+\beta+1)+ \frac{r(k+1)(m+1)^2(k+\beta+1)}{e^2}-2(m+1)\alpha r-\alpha^2\\
&\ge&  r(m+1)^2(k+\beta+1)+ (k+1)(m+1)^2(k+\beta+1)-2(m+1)\alpha r-\alpha^2\\
&\ge& 0.
\end{eqnarray*}

The last inequality follows when we compare the first term with the
third term and the second term with the fourth term. We use the
inequalities 
$\alpha \le \beta+k+1$ and $\alpha < 2(k+1)$ which hold by \eqref{bfs-1}, 
and $m\ge k \ge 1$, which holds by hypothesis. 

Next we show that $\lambda'(r)\ge 0$. 

$\lambda'(n) = 2\left(\frac{r}{e^2}-1\right)
(m+1)^2n-2(m+1)\alpha$. Thus 
\begin{eqnarray*} 
\lambda'(r) &=& \left(\frac{2r^2}{e^2}-2r\right)
(m+1)^2-2(m+1)\alpha\\
&\ge& \frac{2r(e^2+k+1)}{e^2}(m+1)^2-2r(m+1)^2-2(m+1)\alpha\\
&=& 2r(m+1)^2+2r(m+1)^2\frac{k+1}{e^2}-2r(m+1)^2-2(m+1)\alpha\\
&=&2r(m+1)^2\frac{k+1}{e^2}-2(m+1)\alpha\\
&\ge& 2(m+1)^2(k+1)-2(m+1)\alpha\\
&\ge& 0 \hspace{.2in}(\text{by \eqref{bfs-1} and the hypothesis that $m \ge k \ge 1$}).
\end{eqnarray*}

This completes the proof of the proposition.
\end{proof}

\begin{proposition}\label{proper-with-c}
Let $L =dH-m\sum_{i=1}^r E_i$ and $N = L-K_X =(d+3)H-(m+1)\sum_{i=1}^r E_i$.
Let $k$ be a positive integer.
Suppose that $r \ge e^2+k+1$, $(d+3)e > r(m+1)$ and $m\ge k$. Let $D$
be an effective divisor on $X$ such that $D \cdot C_1 \ge 0$. Then $D$
does not satisfy \eqref{bfs}.
\end{proposition}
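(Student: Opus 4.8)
The plan is to argue by contradiction: suppose $D=fH-\sum_{i=1}^r n_iE_i$ is effective, satisfies $D\cdot C_1\ge 0$, and also satisfies \eqref{bfs}. Set $n=\sum_{i=1}^r n_i$, $\gamma=D\cdot C_1=ef-n\ge 0$, $\alpha=N\cdot D$ and $\beta=D^2$. From \eqref{bfs} I will only need the two consequences $\alpha-k-1\le\beta$ and $\alpha<2(k+1)$. The whole argument is organized around exploiting the sign condition $\gamma\ge 0$, so the first step is to decompose $N$ relative to $C_1$.

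Writing $c:=(d+3)-(m+1)e$, one has $N=(m+1)C_1+cH$, and the hypotheses $(d+3)e>r(m+1)\ge (e^2+k+1)(m+1)$ give $c>\frac{(m+1)(k+1)}{e}>0$, so $c\ge 1$. Moreover, by the arithmetic--geometric mean inequality together with $m+1\ge k+1$, $c+e>e+\frac{(m+1)(k+1)}{e}\ge 2\sqrt{(m+1)(k+1)}\ge 2(k+1)$, whence $c+e\ge 2k+3$ since $c+e$ is an integer. This decomposition is exactly what converts $\gamma\ge 0$ into positivity: intersecting with $D$ gives $\alpha=(m+1)\gamma+cf$, and since $\gamma\ge 0$ and $c\ge 1$ this forces $f\le\alpha<2(k+1)$, hence $f\le 2k+1$. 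Because $d+3=(c+e)+me\ge 2k+3$, I get both $f<c+e$ and $f<d+3$.

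The crucial input, and the point where one must not be careless, is the lower bound on $\sum n_i^2$. The Cauchy--Schwarz estimate $\sum n_i^2\ge n^2/r$ used in Proposition \ref{n-greater-than-r} is far too weak here, because \eqref{bfs} forces $D$ to be small (so $n$ is tiny compared with $r$); instead I would use the elementary integer inequality $\sum n_i^2\ge\sum|n_i|\ge|n|$, giving $\beta\le f^2-|n|$. Feeding $\alpha-k-1\le\beta\le f^2-|n|$ together with $\alpha=(m+1)\gamma+cf$ and $\gamma=ef-n$ into a one-line manipulation yields, when $n\ge 0$, the inequality $m\gamma+(c+e)f-f^2\le k+1$, and when $n<0$, the inequality $(d+3)f-f^2-(m+2)n\le k+1$.

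It remains to see that each of these is impossible. When $n\ge 0$ one has $f\ge 1$, since an effective divisor with $f=0$ is a non-negative combination of the $E_i$ and so has $n\le 0$; then, dropping the non-negative term $m\gamma$ and using $1\le f\le c+e-1$, the concave function $f\mapsto f(c+e-f)$ is at least its endpoint value $c+e-1\ge 2k+2>k+1$ on this range, contradicting $m\gamma+(c+e)f-f^2\le k+1$. When $n<0$ the term $-(m+2)n\ge m+2\ge k+2$ already exceeds $k+1$, while $(d+3)f-f^2=f(d+3-f)\ge 0$ because $0\le f<d+3$; this again contradicts the displayed inequality, and it absorbs the remaining $f=0$ possibility since $f=0$ with $D\ne 0$ forces $n<0$. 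As $n\ge 0$ and $n<0$ exhaust all cases, no such $D$ can satisfy \eqref{bfs}. I expect the only genuine obstacle to be the choice of the right estimate for $\sum n_i^2$: everything hinges on replacing Cauchy--Schwarz by the integer bound $\sum n_i^2\ge|n|$, and on the splitting $N=(m+1)C_1+cH$ that turns the hypothesis $D\cdot C_1\ge 0$ into the usable lower bound $\alpha\ge cf$.
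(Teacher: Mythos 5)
Your proof is correct, and it takes a genuinely different route from the paper's. The paper argues by cases on $f=0$, $f=1$, $f\ge 2$ (with sub-cases $k=1,2,\ge 3$), using the bound $f<\frac{2e}{m+1}$ and the observation that $ef-n\in\{0,1\}$ to force $f^2-n\le 2-2k$ and then ruling out each residual configuration by hand. You instead exploit the decomposition $N=(m+1)C_1+cH$ with $c=(d+3)-(m+1)e\ge 1$, which turns the hypothesis $D\cdot C_1\ge 0$ directly into $N\cdot D\ge cf\ge f$ and hence $f\le 2k+1<c+e\le d+3$; combined with the integer bound $\sum n_i^2\ge|n|$ (which the paper also uses, in the form $\sum n_i^2\ge n$, but only for $n\ge 0$) and a clean split on the sign of $n$, the two resulting inequalities $m\gamma+(c+e)f-f^2\le k+1$ and $(d+3)f-f^2-(m+2)n\le k+1$ are each killed by the concavity of $f\mapsto f(c+e-f)$ on $[1,c+e-1]$ together with $c+e\ge 2k+3$ (a nice AM--GM step). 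All the estimates check out: $c>\frac{(m+1)(k+1)}{e}$, $\alpha=(m+1)\gamma+cf$, and both displayed inequalities follow from $\alpha-k-1\le\beta\le f^2-|n|$ exactly as you say. Your argument is shorter, avoids the case analysis on $k$ entirely, and makes transparent where each hypothesis enters ($D\cdot C_1\ge 0$ gives $\alpha\ge cf$; $r\ge e^2+k+1$ gives $c+e\ge 2k+3$; $m\ge k$ gives $-(m+2)n\ge k+2$). The only loose end is the degenerate case $D=0$ (which has $f=0$ and $n=0$, so it slips past your ``$f\ge 1$ when $n\ge 0$'' step); it fails \eqref{bfs} trivially since $D^2<\tfrac{N\cdot D}{2}$ would read $0<0$, so this is cosmetic, but worth a sentence.
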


\begin{proof}
Let $D = fH-\sum_{i=1}^r n_iE_i$. Then $f \ge
0$ and if $f=0$, then $n_i \le 0$ for all $i = 1,\ldots, r$. 

Let $n = \sum_{i=1}^r n_i$. Then $D^2 = f^2 -\sum_{i=1}^r n_i^2 \le
f^2-n$. Indeed, this follows because $\sum_{i=1}^r n_i^2 \ge n$. 
Moreover, the assumption
$D\cdot C_1 \ge 0$ implies that $ef \ge n$. 

Suppose that $D$ satisfies
\eqref{bfs}. We will obtain a contradiction. 



First let $f=0$. We have $N \cdot D = -(m+1)n < 2(k+1) \le 2(m+1)$. So 
$n > -2$. On the other hand,  $0 = ef \ge n$. So $n=0$ or $n=-1$. Since
each $n_i$ is non-positive, if $n=0$ then $D=0$. If $n=-1$, then
$D=E_i$ for some $i$. But then $N\cdot D = m+1 \ge k+1$, hence 
$N\cdot D -k-1 \ge 0$, while $D^2 = -1$. This violates \eqref{bfs}.
  
Let $f=1$. In this case, we have $e \ge n$. We may also
assume $n > 0$. 
Then $2(k+2) > N \cdot D = (d+3)-n(m+1) >
\left(\frac{r}{e}-n\right)(m+1) \ge (e-n)(m+1)+\left(\frac{k+1}{e}\right)(m+1) \ge
(e-n)(k+1)+\left(\frac{k+1}{e}\right)(k+1)$. Thus $0\le e-n< 2$. So $e=n$ or $e=n+1$. 

Let $e=n+1$. Then $0 \ge 1-n \ge D^2 \ge N\cdot D -k-1 >
k+1+\left(\frac{k+1}{e}\right)(k+1)-k-1> 0$, which is a contradiction. If $e=n$,
then  $0 \ge 1-n \ge D^2 \ge N\cdot D -k-1 >
\left(\frac{k+1}{e}\right)(k+1)-k-1$. Hence $k+1< e$. But then $D^2 \le 1-n = 1-e
< -k \le N\cdot D -k-1$. The last inequality holds because $N$ is
ample and hence $N \cdot D > 0$. Again we have a
contradiction, because this violates \eqref{bfs}. 

Now suppose that $f\ge 2$. As above, we have 
$2(k+1) > N\cdot D > \frac{rf(m+1)}{e}-n(m+1) \ge \frac{rf(m+1)}{e}-ef(m+1) =
\left(\frac{rf}{e}-ef\right)(m+1) \ge
\left(\frac{(k+1)f}{e}\right)(m+1).$

\begin{eqnarray}\label{3.4-a}
{\rm Thus~} 2 > \frac{(m+1)f}{e}, {\rm ~or~ equivalently,~} f < \frac{2e}{m+1}.
\end{eqnarray}

If $e \le k+1$, then $f <  \frac{2e}{m+1} \le  \frac{2e}{k+1} \le 2$,
which contradicts the hypothesis $f \ge 2$. So we may assume $e >
k+1$. 

We now make the following claim:

{\bf Claim:} $f^2 - n \le  2-2k$.  

{\bf Proof of Claim:} 
Note that $ef-n \le 1$. Indeed,
$(d+3)f>\frac{rf}{e}(m+1)>ef(m+1)$. Hence $2(k+1) > N\cdot D  =
(d+3)f-n(m+1) > (ef-n)(m+1)\ge (ef-n)(k+1)$. Thus $ef-n < 2$. On the
other hand, by the hypothesis in the proposition $ef-n \ge 0$. Hence
we have:

\begin{eqnarray}\label{3.4-b}
ef-n=0 {\text ~or~} ef-n=1.
\end{eqnarray}

On the other hand, 
\begin{eqnarray}
f^2 -n &<& f\left(\frac{2e}{m+1}\right)-n \hspace{.2in} ({\rm by~}
           \eqref{3.4-a}) \nonumber \\
&=& ef+\frac{ef(1-m)}{m+1}-n \nonumber\\
&\le& \frac{ef(1-k)}{k+1}+ef-n \hspace{.2in}({\rm ~since~} m \ge k)
      \nonumber \\
&\le& \frac{ef(1-k)}{k+1}+1 \hspace{.2in}({\rm ~by~} \eqref{3.4-b})
      \nonumber \\
&\le& f(1-k)+1 \hspace{.2in} ({\rm ~since ~}e > k+1 {\rm
      ~and ~} 1-k \le 0) \nonumber \\
&\le& 2(1-k) +1 = 3-2k \hspace{.2in} ({\rm ~since ~} f\ge 2 {\rm
      ~and ~} 1-k \le 0). \nonumber
\end{eqnarray}

This completes the proof of the claim. Now we consider three cases:

$\underline{k \ge 3}$: In this case,  $f^2-n \le 2-2k \le -k-1$. So
$D^2 = f^2 - \sum_{i=1}^rn_i^2 \le f^2-n \le -k-1$. But 
by \eqref{bfs}, we have $N\cdot D -k-1 \le D^2$. Since $N$ is ample,
$-k \le N\cdot D -k-1$, contradicting the inequality $D^2 \le -k-1$.

$\underline{k=1}$:  Then by \eqref{bfs}, the claim, and the fact that $N \cdot D \ge
1$, we have
$-1 \le N\cdot D - 2 \le f^2-n \le 0$. 
By \eqref{3.4-b}, we have either $ef=n$ or
$ef-1=n$. 

If $ef=n$, then $0 \ge f^2-n = f(f-e) \ge -1$. Since
$f \ge 2$, the only possibility $f=e$.  But this violates
\eqref{3.4-a}, because $m \ge k = 1$.
On other hand, if $ef-1=n$,
then $0 \ge f^2-n = f^2-ef+1=f(f-e)+1\ge -1$. Since $f \ge 2$, $f^2-n$
must be -1. 
But then the only possibility is $f=2$ and $e=1$ and again we have a contradiction to \eqref{3.4-a}.

$\underline{k=2}$: By \eqref{bfs}, the claim, and the fact that $N \cdot D \ge
1$, we have
$-2 \le N\cdot D - 3 \le f^2-n \le -2$. 
Hence $f^2-n=-2$. 
If $ef=n$, then $-2=f^2-ef=f(f-e)$. This contradicts \eqref{3.4-a}. 
If $ef-1 = n$, then $-2=f(f-e)+1$.  
Again we obtain a contradiction to
\eqref{3.4-a}.

This completes the proof of the proposition.
\end{proof}

Now we are ready to prove our main result on $k$-very ampleness. 

\begin{theorem}\label{main1}
Let $C$ be an irreducible and reduced plane curve of
  degree $e$. Let $X \to \proj^2$ be the blow up of $\proj^2$ at $r$
  distinct smooth points $p_1,\ldots,p_r \in C$.  Let $H$
denote the pull-back of $\str_{\proj^2}(1)$ and let $E_1,\ldots,E_r$
be the inverse images of $p_1,\ldots,p_r$ respectively. Let $k$ be a
non-negative integer. 
Let $L=dH-m\sum_{i=1}^r E_i$ be a line bundle on $X$ with $m
\ge k$.

 If $(d+3)e>r(m+1)$ and $r \ge e^2+k+1$, then $L$ is $k$-very ample.
\end{theorem}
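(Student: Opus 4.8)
The plan is to deduce the theorem from the Beltrametti--Francia--Sommese criterion (Theorem \ref{bfs-theorem}) applied to $N = L - K_X = (d+3)H - (m+1)\sum_{i=1}^r E_i$, in exactly the way Theorem \ref{bpf} handles $k=0$ via Reider's theorem. Since the case $k=0$ is Theorem \ref{bpf} (note $r \ge e^2+k+1 = e^2+1$), I would immediately reduce to $k \ge 1$. For such $k$ the strategy is: verify that $N$ is nef and $N^2 \ge 4k+5$, apply Theorem \ref{bfs-theorem} to $K_X+N = L$, and then use Propositions \ref{n-greater-than-r} and \ref{proper-with-c} to rule out any effective divisor satisfying \eqref{bfs}, which forces $L$ to be $k$-very ample.

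First I would check the hypotheses of Theorem \ref{bfs-theorem}. Nefness of $N$ is free: since $N \cdot C_1 = (d+3)e - r(m+1) > 0$ and $d+3 > \tfrac{r}{e}(m+1) \ge e(m+1)$ (using $r \ge e^2$), Theorem \ref{main} shows that $N$ is even ample. For the numerical bound I would estimate $N^2 = (d+3)^2 - r(m+1)^2 > \big(\tfrac{r^2}{e^2}-r\big)(m+1)^2 = r\cdot\tfrac{r-e^2}{e^2}(m+1)^2 \ge (k+1)(m+1)^2 \ge (k+1)^3$, using $r \ge e^2+k+1$ and $m \ge k$; since $(k+1)^3 \ge 4k+4$ for $k \ge 1$ and $N^2$ is an integer, this yields $N^2 \ge 4k+5$.

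With these in hand, Theorem \ref{bfs-theorem} says that if $L$ is not $k$-very ample there is an effective $D = fH - \sum n_i E_i$ satisfying \eqref{bfs}. Proposition \ref{n-greater-than-r} gives $\sum n_i < r$, while the contrapositive of Proposition \ref{proper-with-c} gives $D \cdot C_1 < 0$. As $C_1$ is irreducible, $C_1$ must be a component of $D$, so I would write $D = a C_1 + G$ with $a \ge 1$ maximal and $G$ effective satisfying $G\cdot C_1 \ge 0$. Here the ``defect'' $s := ar - \sum n_i$ is positive (since $\sum n_i < r \le ar$), and the exceptional components of $G$ force $N\cdot G \ge (m+1)s$; combined with $N\cdot C_1 \ge 1$ and the bound $N\cdot D < 2(k+1)$ from \eqref{bfs}, any value $s \ge 2$ would give $N\cdot D \ge a + (m+1)s > 2(k+1)$. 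Hence $s=1$, which forces $a=1$ and $\sum n_i = r-1$, so that $G = gH - \sum_j g_j E_j$ has $g = G\cdot H \ge 0$ and $\sum_j g_j = -1$.

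The crux is then to contradict the left inequality of \eqref{bfs}, namely $N\cdot D - D^2 \le k+1$. Writing $\phi(F) := N\cdot F - F^2$, the identity $\phi(D) = \phi(C_1) + \phi(G) - 2\, C_1\cdot G$ together with $\phi(C_1) = N\cdot C_1 + (r-e^2) \ge 1 + (k+1) = k+2$ reduces everything to the single estimate $\phi(G) \ge 2\, C_1\cdot G$; granting it, $\phi(D) \ge \phi(C_1) \ge k+2 > k+1$, the desired contradiction. Unwinding $\phi(G) - 2C_1\cdot G$ with $\sum_j g_j = -1$, one finds it equals $g\,(d+3-2e-g) + \sum_j g_j^2 + (m-1)$, and I would argue each summand is nonnegative: $d+3-2e-g > 0$ because $d+3 > \tfrac{2r}{e}$ dominates $2e+g$ (using $ef < \sum n_i = r-1$ to bound $g$ and $r > e^2$), $\sum_j g_j^2 \ge 1$ since the $g_j$ are integers summing to $-1$, and $m-1 \ge 0$. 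I expect this last intersection estimate — controlling $G$ after splitting off $C_1$ — to be the main obstacle, since it is precisely the step where the three hypotheses $m \ge k$, $r \ge e^2+k+1$, and $(d+3)e > r(m+1)$ must be used simultaneously; everything else is bookkeeping around Theorem \ref{bfs-theorem} and the two preceding propositions.
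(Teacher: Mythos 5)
Your proposal is correct and follows essentially the same route as the paper: reduce to $k\ge 1$, verify that $N=L-K_X$ is ample with $N^2\ge 4k+5$, apply Theorem \ref{bfs-theorem}, use Propositions \ref{n-greater-than-r} and \ref{proper-with-c} to force $C_1$ to split off from $D$, and then derive a numerical contradiction. The only divergence is in the endgame: the paper pins down $D$ completely (showing $a=1$, $b=0$, hence $f=e$ and $n=r-1$, then contradicting $N\cdot D=1$), whereas you allow a nonzero residual $G$ and instead contradict the left-hand inequality of \eqref{bfs} via the identity $N\cdot D-D^2=\bigl(N\cdot C_1-C_1^2\bigr)+\bigl(N\cdot G-G^2\bigr)-2\,C_1\cdot G$; both endgames are valid and your estimates check out.
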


\begin{proof}
When $k=0$, this is the same as Theorem \ref{bpf}. So we will assume that
$k \ge 1$ and use the criterion of Beltrametti-Francia-Sommese. 

Let $N = L-K = (d+3)H-(m+1)\sum_{i=1}^r E_i$. Just as in the proof of
Theorem \ref{bpf}, we conclude that $N$ is ample.

Next we claim that $N^2 \ge 4k+5$. Indeed, we have
\begin{eqnarray*}
N^2 &=& (d+3)^2-r(m+1)^2 \\
&>& \frac{r^2}{e^2}(m+1)^2-r(m+1)^2\\
&=&(m+1)^2r\left( \frac{r}{e^2}-1\right)\\
&\ge& (m+1)^2r\left(\frac{k+1}{e^2}\right)\\
&>& (m+1)^2(k+1) \\
&\ge& 4(k+1) \hspace{.5in} ({\rm since~} m\ge k \ge 1).
\end{eqnarray*}

Since $N^2$ is an integer and $N^2 > 4k+4$, we conclude that $N^2 \ge
4k+5$. Hence we can apply the criterion of Beltrametti-Francia-Sommese.
Suppose that $L$ is not $k$-very ample. Then there exists an effective
divisor $D$ on $X$ such that \eqref{bfs} holds. We will obtain a
contradiction. 

Write $D = fH - n_1E_1-\ldots-n_rE_r$ 
and let  $n= \sum_i n_i$. By Proposition \ref{n-greater-than-r}, we have
$n<r$. By Proposition \ref{proper-with-c},
$D \cdot C_1 < 0$. This implies that $C_1$ is a component of $D$ and we
may write $D = aC_1 + D'$ for a positive integer $a$ and an effective
divisor $D'$. 

Write $D' = bH - \sum_{i=1}^r l_i E_i$ with $b \ge 0$. Then we have
$f=ae+b$. So 
\begin{eqnarray*}
2(k+1) &>& N\cdot D \\
&=& (d+3)f-(m+1)n \\
&>& \frac{r}{e}(ae+b)(m+1)-(r-1)(m+1)  \hspace{.2in} ({\rm since~}
       n\le r-1)\\
&=& (r(a-1)+1)(m+1)+\frac{rb}{e}(m+1)\\
&\ge&  (r(a-1)+1)(k+1)+\frac{rb}{e}(k+1).
\end{eqnarray*}

Thus $a=1$ and $b=0$. In particular,  $f=e$. 
We have $2(k+1) > N\cdot D = (d+3)e-n(m+1) > (r-n)(m+1)$. Hence $r-n <
2$. On the other hand, $r-n \ge 1$. So $r-n=1$. 

Since $N$ is ample, $N\cdot D > 0$. Thus $-k \le N\cdot D -k - 1\le
D^2 = e^2 -\sum_{i=1}^r n_i^2 \le e^2 -n = e^2-r+1 \le -k$. The last
  inequality holds because $r \ge e^2+k+1$. Thus we have $D^2 = -k $
  and $N\cdot D =1$. But $N\cdot D > m+1 \ge k+1 > 1$. This is a
  contradiction.  

The proof of the theorem is complete.
\end{proof}

\begin{remark}
\cite[Theorem 4.1]{VT} gives conditions for $k$-very ampleness for any
line bundle on the blow up of $\proj^2$ at general points on an
irreducible and reduced cubic. In our context, this is the case
$e=3$. If the line bundle is uniform, that is if $L = dH-m\sum_{i=1}^r
E_i$ and if the number of points $r$ is at least $10+k$, then our Theorem
\ref{main1} is comparable to this result. However, we note that \cite[Theorem
4.1]{VT} deals with any (not just uniform) line bundle $L = dH-\sum_{i=1}^r m_iE_i$
and any $r
\ge 3$. 
\end{remark}

The next two examples show that 
the hypothesis of
Theorem \ref{main1} can not be weakened for $e=3$.

\begin{example}\label{strict-inequality}
Let $C$ be a smooth plane cubic. Let $X$ be the blow up of 10 
distinct points on $C$. Consider the line bundle $L  =
7H-2\sum_{i=1}^{10} E_i$ on $X$. We
have $C_1 = 3H-\sum_{i=1}^{10} E_i$. By Corollary \ref{big-r}, $L$ is
ample. We use \cite[Corollary 1.4]{BS1} to show that $L$ is not 
globally generated. According to this result, if a line bundle on
a curve of positive genus is $k$-very ample, then the degree of the 
line bundle is 
at least $k+2$. In our example, if $L$ is globally generated (that is,
if it is 0-very ample), then $L_{|C}$ is also $0$-very ample on $C$. 
But deg$(L_{|C}) = L\cdot C =1 < 2$. 
So the strict inequality, $e(d+3) > r(m+1)$,  in the hypothesis of
Theorem \ref{main1} can not be relaxed. 
\end{example}

\begin{example} \label{points}
This is a small variation on Example \ref{strict-inequality}.
Again let $e=3, r= 10$, but now let  $m =7$. Consider $L  =
24H-7\sum_{i=1}^{10} E_i$. It is easy to check that $L$ is ample (by
Corollary \ref{big-r}) and globally generated (by Theorem
\ref{main1}). But $L$ is not very ample because $L\cdot C_1 = 2 <
1+2$ (again by \cite[Corollary 1.4]{BS1}). Here the hypothesis in
Theorem \ref{main1} on the number of points (namely, $r \ge e^2+k+1=10+k)$
does not hold. 
\end{example}

\begin{example} If $e > 3$, our hypotheses in Theorem \ref{main1} are
  not likely to be
  optimal. We will illustrate this with just one
  example. 

Let $e=5, k = 5, r=31$ and consider $L_d =
dH-5(E_1+\ldots+E_{31})$. By Theorem \ref{main1}, $L_{35}$ is 5-very
ample. On the other hand, \cite[Corollary 1.4]{BS1} shows that $L_{32}$ is {\it
  not} $5$-very ample. We do not know if $L_{33}$ or $L_{34}$ is 5-very
ample. Note that the criterion of Beltrametti-Francia-Sommese
(Theorem \ref{bfs-theorem}) can not be applied here, since $N_d=L_d-K_X$ is not
nef for $d < 35$. Indeed, $N_d\cdot C_1 = 5(d+3)-186 < 0$, for $d <
35$. In other words, our method (which is to use Theorem \ref{bfs-theorem}
to show $k$-very ampleness) itself is 
not applicable to $L_{33}$ and $L_{34}$.
\end{example}

{\bf Acknowledgements:} We sincerely thank Tomasz Szemberg 
for carefully reading this paper, pointing out some 
mistakes and making useful suggestions. 
We also thank the referee for pointing out some mistakes and making
numerous suggestions which improved the exposition of the paper.

\bibliographystyle{plain}

\end{document}